\newcommand{\bH}{\mathbb H}
\begin{document}
\title{Intersecting limit sets of Kleinian subgroups and Susskind's question}

\authortushar
\authordavid

\subjclass[2010]{Primary 20H10, 30F40, 22E40}
\keywords{Fuchsian group, Kleinian group, Schottky group, limit set, hyperbolic space}

\begin{abstract}
We construct a non-elementary Fuchsian group that admits two non-elementary subgroups with trivial intersection and whose radial limit sets intersect non-trivially. This negatively answers a question of Perry Susskind (1989) that was stated as a conjecture by James W. Anderson (2014).
\end{abstract}
\maketitle

Fix $2\leq d\leq \infty$, and let $\bH^d$ denote $d$-dimensional real hyperbolic space and
$\del\H^d$ denote its boundary; see \cite[\62]{DSU} for background regarding the case $d = \infty$. 
Elements of the isometry group of $d$-dimensional real hyperbolic space, $\Isom(\H^d)$,  come in three flavors, see \cite[\64]{BIM} or \cite[\66]{DSU}. More precisely, every $g\in\Isom(\H^d)$ is exactly one of the following three types: \textsf{elliptic} if it has bounded orbits; \textsf{parabolic} if it has unbounded orbits and it has one fixed point on the boundary, and \textsf{loxodromic} if it has unbounded orbits and it fixes two points on the boundary. 

A group $G\leq\Isom(\bH^d)$ is called \textsf{(strongly) discrete} if for every $R > 0$
\[
\#\{g\in G: \mathrm{dist}_\H(\0,g(\0))\leq R\} < \infty.
\]
If one replaced $\0 \in \H^d$ by any other point, then the class of (strongly) discrete groups would remain unchanged. The adverb ``strongly'' is used in infinite dimensions since there exist a variety of weaker notions of discreteness when one leaves the finite-dimensional realm; cf. \cite[\65]{DSU}. 
Discrete subgroups of $\Isom(\bH^d)$ are more commonly referred to as \emph{Fuchsian} groups when $d=2$, and as \emph{Kleinian} groups when $d=3$. Their study in dimension $d > 3$ is a more recent phenomenon, e.g., see Michael Kapovich's 2008 survey \cite{Kapovich}.

The \textsf{limit set} of $G$ is the set
\[
\Lambda(G) := \{\xi\in\del\bH^d: \exists (g_n)_1^\infty\text{ in $G$} ~\text{such that}~ g_n(\0)\tendsto n \xi\}.
\]
The group $G$ is called \textsf{non-elementary} if its limit set contains at least three points, in which case its limit set must be perfect and contain uncountably many points \cite[Proposition 10.5.4]{DSU}. In this case, the limit set $\Lambda(G)$ may be characterized (\cite[Proposition 7.4.1]{DSU}) as the smallest (in the sense of inclusion) closed $G$-invariant subset of $\del\bH^d$ that contains at least two points.
In the sequel we will be interested in two distinguished subsets of the limit set $\Lambda(G)$, viz. the \textsf{radial limit set}, defined as 
\[
\Lr(G) := \{ \eta \in \Lambda(G) ~:~ \exists c>0, ~\exists (g_n)_1^\infty\text{ in $G$}  ~\text{such that}~ [0, \eta]_\H \cap B(g_n(\0),c) \neq \emptyset \},
\]
and the \textsf{uniformly radial limit set}, defined as 
\[
\Lur(G) := \{ \eta \in \Lambda(G) ~:~ \exists c>0  ~\text{such that}~ [0,\eta]_\H \subset \bigcup_{g \in G} B(g(\0),c) \}.
\]
In these definitions, $[\0,\eta]_\H$ denotes the hyperbolic ray from $\0$ to $\eta$, and $B(x,r)$ the hyperbolic ball of radius $r$ centered at $x \in \H$. We note that if one replaced $\0 \in \H^d$ by any other point, then the definitions of these sets  would remain unchanged. Further, it follows directly from the definitions that 
\[
\Lur(G) \subset \Lr(G) \subset \Lambda(G).
\]
To glean some dynamical intuition for these sets, project $[\0,\eta]_\H$ onto the associated hyperbolic orbifold $M := \H^d/G$ where it maps to a geodesic ray starting from the point on $M$ which corresponds to the origin $\0 \in \H$. For $\eta \in \Lr(G)$ this ray performs a recurrent geodesic excursion on $M$, i.e. there exists a bounded region in $M$ which gets visited infinitely often; whereas for $\eta \in \Lur(G)$ the ray describes a bounded excursion, i.e. each point on the ray is at most a bounded distance away from the starting point in $M$. 

Since the work of Bernard Maskit in the 1970s, there has been sustained interest in developing results that elucidate the relationship between (the limit set of) the intersection of a pair of subgroups of a Kleinian group, and the intersection of their respective limit sets, see \cite{Maskit2, Susskind, Susskind2, SusskindSwarup, Anderson6, Anderson7, Anderson4,Yang2}. A natural question, which arose amidst these results, was to clarify the extent to which a non-empty intersection of the limit sets of a pair of subgroups was related to the intersection of the respective subgroups being non-trivial.
In such a vein, given a pair of non-elementary subgroups $G_1, G_2$ of a non-elementary Kleinian group $G \leq \Isom(\H^d)$, Perry Susskind asked in \cite[\66.3]{Susskind2}  (for $d = 3$) whether 
\begin{equation}
\label{conjecture}
\Lr(G_1) \cap \Lr(G_2) \subseteq \Lambda(G_1 \cap G_2).
\end{equation}
Recently James W. Anderson stated the higher-dimensional case (for $2\leq d < \infty$) of formula \eqref{conjecture} as a conjecture (attributed there to Susskind) in \cite[Conjecture $0$]{Anderson5}. To demonstrate the sharpness of \eqref{conjecture}, Anderson \cite[\63]{Anderson5} constructed examples in $\H^2$ to show that neither $\Lr(G_1) \cap \Lr(G_2) \subseteq \Lr(G_1 \cap G_2)$ or $\Lambda(G_1) \cap \Lambda(G_2) \subseteq \Lambda(G_1 \cap G_2)$ are true for arbitrary non-elementary subgroups $G_1, G_2$ of a fixed $G \leq \Isom(\H^d)$. He went on to prove \cite[Proposition 1]{Anderson5} the following:
\begin{theorem}[Anderson]
Let $G_1, G_2$ be non-elementary subgroups of a non-elementary Kleinian group $G \leq \Isom(\H^d)$, for some $2 \leq d < \infty$. Then 
\[
\Lr(G_1) \cap \Lur(G_2) \subseteq \Lr(G_1 \cap G_2).
\]
\end{theorem}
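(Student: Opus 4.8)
The plan is to exploit the asymmetry of the two hypotheses: uniform radiality of $G_2$ supplies a $G_2$-orbit point close to \emph{every} point of the ray $[\0,\eta]_\H$, whereas mere radiality of $G_1$ supplies infinitely many $G_1$-orbit points that escape to $\eta$ while tracking that ray. Playing these off against one another, together with discreteness of the ambient group $G$, should produce infinitely many elements of $G_1\cap G_2$ whose orbit points track the ray out to $\eta$. The uniformity in the $G_2$ hypothesis is exactly what lets me find a $G_2$-point at the \emph{same heights} where the $G_1$-points appear, so the asymmetry $\Lr(G_1)\cap\Lur(G_2)$ is essential to the argument.

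Concretely, fix $\eta\in\Lr(G_1)\cap\Lur(G_2)$. First I would record the data: a constant $c_1>0$ and a sequence $(h_n)$ in $G_1$ with $h_n(\0)$ within $c_1$ of $[\0,\eta]_\H$ and $h_n(\0)\to\eta$ (discreteness forces the $h_n(\0)$ to be eventually distinct and to escape to infinity along the ray); and a constant $c_2>0$ with $[\0,\eta]_\H\subset\bigcup_{g\in G_2}B(g(\0),c_2)$. For each $n$, take the point of the ray nearest $h_n(\0)$ and then a $G_2$-orbit point within $c_2$ of it; the triangle inequality yields $g_n\in G_2$ with $\mathrm{dist}_\H(h_n(\0),g_n(\0))\le c_1+c_2$.

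The crucial step is a pigeonhole argument. Since $\mathrm{dist}_\H(\0,g_n^{-1}h_n(\0))=\mathrm{dist}_\H(g_n(\0),h_n(\0))\le c_1+c_2$ and the elements $g_n^{-1}h_n$ all lie in the \emph{discrete} group $G$, they take only finitely many values; passing to a subsequence I may assume $g_n^{-1}h_n=\gamma$ is constant, i.e.\ $h_n=g_n\gamma$. The point of writing it this way is the cancellation
\[
h_n h_{n_1}^{-1}=g_n\gamma\,\gamma^{-1}g_{n_1}^{-1}=g_n g_{n_1}^{-1},
\]
where $n_1$ is a fixed index of the subsequence. The left-hand side lies in $G_1$ and the right-hand side lies in $G_2$, so $\gamma_n:=h_n h_{n_1}^{-1}\in G_1\cap G_2$. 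It is essential to multiply on the \emph{right} by $h_{n_1}^{-1}$; the product $h_{n_1}^{-1}h_n$ would only land in the conjugate $G_1\cap\gamma^{-1}G_2\gamma$, which would not suffice.

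It remains to check that these $\gamma_n$ witness $\eta\in\Lr(G_1\cap G_2)$. Writing $p:=h_{n_1}^{-1}(\0)$, we have $\gamma_n(\0)=h_n(p)$, so $\mathrm{dist}_\H(\gamma_n(\0),h_n(\0))=\mathrm{dist}_\H(p,\0)$ is a fixed constant; hence $\gamma_n(\0)$ stays within $c_1+\mathrm{dist}_\H(\0,p)$ of the ray, and since $h_n(\0)\to\eta$, stability of boundary limits under bounded hyperbolic perturbation gives $\gamma_n(\0)\to\eta$. The $\gamma_n$ thus form an escaping sequence in $G_1\cap G_2$ tracking $[\0,\eta]_\H$, which is precisely the definition of a radial limit point. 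I expect the only genuine obstacle to be the bookkeeping in the cancellation step — getting the multiplication order right so that the constant $\gamma$ disappears and the product truly lands in $G_1\cap G_2$ rather than a conjugate; the geometric facts (bounded neighborhoods of a ray, stability of boundary limits) are routine in the CAT$(-1)$ setting, including the case $d=\infty$.
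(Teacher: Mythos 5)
Your argument is correct. The paper itself does not reproduce a proof of this statement---it is quoted as \cite[Proposition 1]{Anderson5}---but what you have written is precisely the standard argument for results of this type (going back to Susskind--Swarup and used by Anderson): bound $\mathrm{dist}_\H(h_n(\0),g_n(\0))$ by $c_1+c_2$ using the uniform radiality of $G_2$ at the feet of the $G_1$-orbit points, invoke discreteness of $G$ to pigeonhole $g_n^{-1}h_n$ down to a constant $\gamma$ along a subsequence, and observe that $h_nh_{n_1}^{-1}=g_ng_{n_1}^{-1}\in G_1\cap G_2$ gives an escaping sequence in the intersection that still fellow-travels $[\0,\eta]_\H$. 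Your side remarks are also on point: multiplying on the correct side to avoid landing only in a conjugate of $G_2$ is exactly the step one must not fumble, and the fact that the argument uses nothing beyond discreteness and coarse geodesic geometry is what underlies Remark~\ref{remark2} of the paper (the purely loxodromic hypothesis is superfluous and the statement extends to proper geodesic Gromov hyperbolic spaces). The only cosmetic caveat is that the paper's displayed definition of $\Lr$, read literally, does not force the orbit points $h_n(\0)$ to be distinct or to escape to $\eta$; you should make explicit that you are using the standard reading (infinitely many distinct $h_n$, whence $h_n(\0)\to\eta$ by discreteness), which is clearly what is intended.
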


\begin{remark}
\label{remark2}
Anderson had assumed that $G$ was {\it purely loxodromic} (i.e.~one that did not contain parabolic or elliptic elements), which is an assumption that was not used in Anderson's proof. One could generalize Anderson's theorem to the setting of proper geodesic Gromov hyperbolic metric spaces, since the proof does not make use of any facts about real finite-dimensional hyperbolic space in any essential way.
\end{remark}

The main result of this note proves that despite Anderson's positive result, \eqref{conjecture} is too optimistic.

\begin{theorem}
\label{maintheorem}
There exists a Fuchsian group $G \leq \Isom(\H^2)$ that admits subgroups $G_1,G_2 \leq G$ such that $\Lr(G_1)\cap \Lr(G_2) \neq \emptyset$, but $G_1\cap G_2 = \lb e\rb$. In particular, \eqref{conjecture} is false.
\end{theorem}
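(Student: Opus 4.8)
The plan is to take $G$ to be an infinitely generated free Fuchsian group, built by ping-pong so that $G = \langle a_1,a_2,\ldots\rangle * \langle b_1,b_2,\ldots\rangle$, and to set $G_1 = \langle a_1,a_2,\ldots\rangle$ and $G_2 = \langle b_1,b_2,\ldots\rangle$. Then $G_1,G_2$ are complementary free factors of $G$, so $G_1 \cap G_2 = \lb e\rb$ holds automatically, and both are non-elementary (free of infinite rank). All the difficulty is thus concentrated in manufacturing a single common radial limit point $\eta \in \Lr(G_1)\cap\Lr(G_2)$. Anderson's theorem sharply constrains what is possible: applied in both orders it gives $\Lr(G_1)\cap\Lur(G_2) \subseteq \Lr(G_1\cap G_2) = \Lr(\lb e\rb) = \emptyset$ and symmetrically, so any such $\eta$ must be radial but \emph{not} uniformly radial for each of $G_1$ and $G_2$. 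In particular neither subgroup may be convex-cocompact — this is exactly why they are taken to be infinitely generated. (For finitely generated Schottky factors one would have $\Lur = \Lr = \Lambda$ and, the orbit map being a quasi-isometric embedding, the limit sets of complementary free factors would be disjoint.)

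For the construction I work with loxodromic generators $a_n, b_n$ whose Schottky domains are pairwise disjoint and in ping-pong position, so that $G$ is discrete and free (ping-pong lemma / Klein--Maskit combination). The heart of the idea is to arrange the geometry of these domains so that the two nested towers $D_{a_1} \supset D_{a_1 a_2} \supset \cdots$ and $D_{b_1} \supset D_{b_1 b_2} \supset \cdots$, associated to the infinite words $a_1 a_2 a_3 \cdots$ and $b_1 b_2 b_3 \cdots$, shrink to one and the same boundary point: $\bigcap_n D_{a_1\cdots a_n} = \lb \eta\rb = \bigcap_n D_{b_1 \cdots b_n}$. Then these two words represent the same $\eta \in \del\H^2$, whence $\eta \in \Lambda(G_1)\cap\Lambda(G_2)$. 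In effect the boundary map folds the (disjoint) Gromov boundaries of the two free factors together at $\eta$; such folding is possible precisely because the configuration is forced to degenerate — the contraction ratios along the towers must tend to $0$ — which is the geometric signature both of infinite generation and of $\eta$ failing to be uniformly radial.

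It remains to upgrade $\eta \in \Lambda(G_i)$ to $\eta \in \Lr(G_i)$, and this is where I expect the main obstacle to lie. The natural candidate orbit points are $P_n(\0) := (a_1\cdots a_n)(\0) \in G_1\cdot\0$ and $Q_n(\0) := (b_1\cdots b_n)(\0) \in G_2\cdot\0$; if the domains are placed symmetrically about the ray $[\0,\eta]_\H$, one hopes that each $P_n(\0)$ and $Q_n(\0)$ stays within a single constant $c$ of $[\0,\eta]_\H$, which gives $\eta \in \Lr(G_1)\cap\Lr(G_2)$ straight from the definition. The difficulty is the simultaneous quantitative control this demands: to fold the boundary the towers must contract to a point (so the domains shrink and convex-cocompactness fails), yet to keep the shadowing points at bounded distance from the ray each orbit point must remain well-centered over its ever-shrinking domain. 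Balancing these competing requirements — selecting the positions, radii, and translation lengths of the $a_n$ and $b_n$ (say, as explicit M\"obius transformations of the upper half-plane with $\eta = 0$ and $[\0,\eta]_\H$ the imaginary axis, keeping the horizontal offset of $P_n(\0), Q_n(\0)$ comparable to the corresponding domain size) and re-checking discreteness of the combined group — is the technical core of the argument. Granting it, $\eta \in \Lr(G_1)\cap\Lr(G_2)$ while $G_1\cap G_2 = \lb e\rb$, so \eqref{conjecture} fails.
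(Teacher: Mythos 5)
Your strategy is reasonable in outline, and your use of Anderson's theorem to deduce that any such $\eta$ must lie in $\Lr(G_i)\setminus\Lur(G_i)$ for both $i$ --- hence that $G_1,G_2$ cannot be convex-cocompact and must be infinitely generated --- is correct and genuinely illuminating. But as written the argument has a real gap: the entire geometric content of the theorem, namely producing a configuration of Schottky disks for which the two nested towers share a boundary point $\eta$, \emph{and} the orbit points $(a_1\cdots a_n)(\0)$, $(b_1\cdots b_n)(\0)$ stay within a uniform distance of $[\0,\eta]_\H$, \emph{and} the resulting group remains discrete and free, is explicitly deferred (``Granting it\dots''). This is not a routine verification. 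Since the disks in your two towers must have pairwise disjoint interiors for ping-pong, yet all of them must contain $\eta$ in their closures, one checks that every consecutive pair of disks in each tower is forced to be internally tangent and the two towers externally tangent at $\eta$; pulling back, the original circles $C_{a_n},C_{a_n}',C_{b_n},C_{b_n}'$ must realize an infinite chain of tangencies. The standard ping-pong lemma for mutually disjoint closed disks then no longer applies verbatim, and the shadowing estimate $\mathrm{dist}(a_1\cdots a_n(\0),[\0,\eta]_\H)\leq c$ becomes a delicate quantitative claim about how fast the disks may shrink relative to this tangency pattern. None of this looks impossible, but it is exactly where the proof lives, and you have not supplied it.

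The paper sidesteps all of this by working inside a fixed two-generator purely loxodromic Schottky group $G=\lb a,b\rb$, for which the orbit map $g\mapsto g(\0)$ is automatically a quasi-isometric embedding. It takes an infinite word $\omega_1\tau_1\omega_2\tau_2\cdots$ in $a,b$ and lets $\theta_n=\omega_1\tau_1\cdots\omega_n\tau_n$ be its successive prefixes; since each $\theta_{n+1}$ extends $\theta_n$, the points $\theta_n(\0)$ track the ray to their limit $\eta$ within a bounded distance, so setting $G_1=\lb \theta_n : n \text{ odd}\rb$ and $G_2=\lb \theta_n : n \text{ even}\rb$ yields $\eta\in\Lr(G_1)\cap\Lr(G_2)$ with no geometric estimates at all. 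The price is that $G_1\cap G_2=\lb e\rb$ is no longer automatic; it is established by a purely combinatorial reduction argument showing that the $\theta_n$ freely generate a free group of infinite rank (this is where the palindromic shape $\omega_n\tau_n$ and the prefix-free choice of the $\omega_n$ are used). In short, the paper trades your hard geometry for easy combinatorics. To complete your version you would need to write out the tangency construction and the uniform shadowing estimate explicitly; the most economical way to do so is probably to realize your $a_n,b_n$ as words in a fixed Schottky group, at which point you have essentially rediscovered the paper's proof.
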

\begin{proof}
Let $G$ be a Schottky group, \cite[\610]{DSU}, generated by two loxodromic elements $a,b\in\Isom(\H^2)$ chosen so that $G$ will be purely loxodromic.\footnote{This is the classical construction of a Schottky group that is free on $2$ generators (see e.g. \cite{Maskit}): consider two pairs of mutually disjoint circles $C_a,C_a'$ and $C_b,C_b'$ that intersect $\del\H^2$ orthogonally and have mutually disjoint interiors. Now choose a (loxodromic) isometry $a \in \Isom(\H^2)$ that maps $C_a$ to its partner $C_a'$ and sends the exterior of $C_a$ onto the interior of its partner; and similarly choose another loxodromic element $b \in \Isom(\H^2)$ that analogously acts on the pair $C_b,C_b'$.} Let $E = \{a,b\}$, and let $E^*$ be the set of all finite words using the alphabet $E$. Let $I\subset E^*$ be an infinite set with the property that no element of $I$ is an initial segment of any other element of $I$, for example $I = \{b^n a : n\in\N\}$. Let $(\omega_n)_1^\infty$ be an indexing of $I$, and for each $n$ let $\tau_n$ denote the word resulting from writing $\omega_n$ backwards, e.g. $\omega_n = b^n a$ and $\tau_n = a b^n$. Next, for each $n$ let
\[
\theta_n = \omega_1 \tau_1 \cdots \omega_n \tau_n.
\]
Let $G_1 = \lb \theta_n : \text{$n$ odd}\rb$ and $G_2 = \lb \theta_n : \text{$n$ even}\rb$, where $\lb S \rb$ denotes the group generated by a set $S$. Since $G$ is a Schottky group, the embedding $G\ni g \mapsto g(\0) \in \H^2$ is a quasi-isometry, and thus the sequence $(\theta_n(\0))_1^\infty$ converges to some element $\eta \in \del\H^2$, for which there exists $c>0$ such that for all $n$
\[
[0, \eta]_\H \cap B(\theta_n(\0),c) \neq \emptyset,
\]
see e.g. \cite[Lemma 10.4.4]{DSU} for details. It follows that $\eta \in \Lr(G_1)\cap \Lr(G_2)$, and in particular $\Lr(G_1)\cap \Lr(G_2) \neq \emptyset$.

So to complete the proof, we must show that $G_1 \cap G_2 = \lb e\rb$. It suffices to show that the sequence $(\theta_n)_1^\infty$ generates the free group on infinitely many elements, or equivalently that $(\omega_n \tau_n)_1^\infty$ generates the free group on infinitely many elements. So consider any non-trivial reduced word in the symbols $(\omega_n\tau_n)_1^\infty$, say
\begin{equation}
\label{word}
(\omega_{n_1}\tau_{n_1})^{\epsilon_1} \cdots (\omega_{n_m} \tau_{n_m})^{\epsilon_m},
\end{equation}
where $\epsilon_i \in \{\pm 1\}$ for each $i = 1,\ldots,m$. 
To find the reduced form of this word, we will first subdivide it into words of the form $\omega_n^\pm$ and $\tau_n^\pm$, group them in pairs such that the first and last of these words stand alone, perform the reduction within each pair, and then show that no further reduction occurs within the entire word.

Consider any $i$ such that $\epsilon_i = 1$ and $\epsilon_{i + 1} = -1$. Since the word \eqref{word} is reduced with respect to the symbols $(\omega_n\tau_n)_1^\infty$, we have $n_{i + 1} \neq n_i$, and so the condition on $I$ guarantees that when we reduce the word $\tau_{n_i} \tau_{n_{i + 1}}^{-1}$, the leftmost and rightmost letters are unaffected. Similarly, if $\epsilon_i = -1$ and $\epsilon_{i + 1} = 1$, then when we reduce the word $\omega_{n_i}^{-1} \omega_{n_{i + 1}}$, the leftmost and rightmost letters are unaffected. If $\epsilon_i = \epsilon_{i + 1}$, then the corresponding substring of \eqref{word} does not require any reduction at all since it contains only positive powers of the letters $a,b$. In conclusion, when we reduce \eqref{word} (with respect to the symbols $a,b$), the first and last substrings of the form $\omega_n^{\pm}$ or $\tau_n^\pm$ are unaffected, and every other pair of such substrings is reduced together but the leftmost and rightmost letters of such a pair are unaffected, meaning that no further reduction will be done. Thus the word \eqref{word} does not reduce to the empty word. Since the word \eqref{word} was arbitrary, this means that $(\omega_n \tau_n)_1^\infty$ generates the free group and thus $G_1\cap G_2 = \lb e\rb$.
\end{proof}

\begin{remark}
Theorem \ref{maintheorem} can be generalized to the setting of Gromov hyperbolic metric spaces that admit a purely loxodromic Schottky group. 
Such spaces include real, complex and quaternionic hyperbolic spaces, referred to as {\it algebraic hyperbolic spaces} in \cite[\61.1.1]{DSU}.
\end{remark}

\begin{remark}
In our proof of Theorem \ref{maintheorem} , notice that $\#(\Lr(G_1)\cap \Lr(G_2))\geq1$. This provokes the question of how large $\Lr(G_1)\cap \Lr(G_2)$ can be when $G_1 \cap G_2 = \lb e\rb$. For instance, if $\Lr(G_1)\cap \Lr(G_2)$ were assumed to have positive Hausdorff dimension, would it then follow that \eqref{conjecture} holds?
\end{remark}

\noindent {\bf Acknowledgements.} The first-named author was supported in part by a 2017-2018 Faculty Research Grant from the University of Wisconsin-La Crosse. The second-named author was supported by the EPSRC Programme Grant EP/J018260/1. The authors thank James Anderson, Perry Susskind and an anonymous referee for helpful comments.

\bibliographystyle{amsplain}

\bibliography{bibliography}

\end{document}